\documentclass[12pt]{article}
\usepackage[utf8]{inputenc}
\usepackage{amsmath}
\usepackage{amsthm}
\usepackage{amssymb}
\usepackage{geometry}
\usepackage{natbib}
\makeatletter
\usepackage{amsthm}
\usepackage{url}
\newcommand{\norm}[1]{\left\| #1 \right\|}
\newcommand{\abs}[1]{\left| #1 \right|}
\newcommand{\E}{\mathbb{E}}
\DeclareMathOperator{\Var}{Var}

\newcommand{\ind}[1]{\mathbf{1}_{\{#1\}}}
\makeatother

\theoremstyle{plain}
\newtheorem{thm}{\protect\theoremname}
\newtheorem{lem}{\protect\lemmaname}
\providecommand{\lemmaname}{Lemma}
\providecommand{\theoremname}{Theorem}

\begin{document}
\title{A Sharp Small-Coefficient  Variant of Khintchine's Inequality and the Sharp $\pi/2$
Theorem}
\author{Lei Yu\thanks{L. Yu is with the School of Statistics and Data Science, LPMC, KLMDASR,
and LEBPS, Nankai University, Tianjin 300071, China (e-mail: leiyu@nankai.edu.cn).
This work was supported by the National Key Research and Development
Program of China under grant 2023YFA1009604 and the NSFC under grant
62101286.}}
\maketitle
\begin{abstract}
We prove a refined quadratic normal approximation for the first absolute
moment of normalized weighted Rademacher sums with bounded maximal
coefficients. For any weight vector $w\in\mathbb{R}^{n}$ satisfying
$\norm{w}_{2}=1$ and $\norm{w}_{\infty}\leq\beta$ with sufficiently
small $\beta>0$, we establish the uniform error bound $\left|\E\abs{\sum^{n}_{i=1}w_{i}X_{i}}-\sqrt{2/\pi}\right|=O(\beta^{2})$
over all admissible weight configurations. Our proof combines zero-bias
Stein’s method and refined small-ball probability estimates to exploit
symmetry cancellation and control the non-smooth residual of the absolute-value
test function. An explicit extremal construction further verifies
the optimality of this quadratic convergence rate. As an application,
we establish an asymptotically sharp refinement of the Friedgut--Kalai--Naor
(FKN) theorem for Boolean functions, also known as the sharp $\pi/2$
theorem, characterizing the level-1 Fourier energy for functions deviating
far from dictatorships. 
\end{abstract}

\section{Introduction}

Let $X_{1},X_{2},\dots,X_{n}$ be independent identically distributed
Rademacher random variables, i.e., $\mathbb{P}(X_{i}=1)=\mathbb{P}(X_{i}=-1)=1/2$.
Given a weight vector $w=(w_{1},\dots,w_{n})\in\mathbb{R}^{n}$, define
the normalized weighted Rademacher sum 
\[
W=\sum^{n}_{i=1}w_{i}X_{i},
\]
subject to the normalization condition $\norm{w}^{2}_{2}=\sum^{n}_{i=1}w^{2}_{i}=1$
and maximal coefficient bound $\norm{w}_{\infty}=\max_{1\leq i\leq n}\abs{w_{i}}\leq\beta$.
By the classical central limit theorem, $W$ converges in distribution
to a standard Gaussian random variable $Z\sim\mathcal{N}(0,1)$. A
fundamental Gaussian approximation for symmetric distributions is
the first absolute moment matching: $\E\abs{W}\to\E\abs{Z}=\sqrt{2/\pi}$
as $\beta\to0$. König, Schütt, and Tomczak-Jaegermann \cite{konig1999projection}
proved the following variant of Khintchine's inequality 
\begin{equation}
\left|\mathbb{E}|W|-\sqrt{2/\pi}\right|\le\Big(1-\sqrt{\frac{2}{\pi}}\Big)\,\beta,\label{eq:-3}
\end{equation}
which can be also recovered with an implicit coefficient $C$ by applying
the Berry--Esseen Theorem with proper truncation argument \cite[Theorem 5.16]{ODonnell14analysisof}.

In this work, we rigorously prove that the absolute moment approximation
error decays quadratically in $\beta$, establishing a sharp improvement
over the bound above. We define 
\[
G(\beta)=\sup\left\{ \E\abs{\sum^{n}_{i=1}w_{i}X_{i}}:n\geq1,\ \norm{w}_{2}=1,\ \norm{w}_{\infty}\leq\beta\right\} ,
\]
which quantifies the worst-case Gaussian approximation error over
all admissible weighted Rademacher sums. Our main theorem establishes
the optimal quadratic expansion of $G(\beta)$, uniform over all valid
weight configurations. 
\begin{thm}[Sharp Small-Coefficient  Variant of Khintchine's Inequality]
\label{thm:main} There exist universal constants $C,\beta_{0}>0$
such that for all $0<\beta\leq\beta_{0}$, all integers $n\geq1$,
and all weight vectors $w\in\mathbb{R}^{n}$ with $\norm{w}_{2}=1$
and $\norm{w}_{\infty}\leq\beta$, 
\begin{equation}
\left|\E\abs{W}-\sqrt{\frac{2}{\pi}}\right|\leq C\beta^{2}.\label{eq:-13}
\end{equation}
Consequently, $G(\beta)=\sqrt{2/\pi}+O(\beta^{2})$, and the quadratic
rate is optimal in the order. 
\end{thm}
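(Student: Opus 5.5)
We will run zero-bias Stein's method with the test function $f(x)=\abs{x}$. Let $W^{*}$ be the zero-bias transform of $W$. Since $W$ is a sum of independent symmetric variables with variance one, we may realize $W^{*}=W-w_{I}X_{I}+w_{I}U_{I}$. Here $I$ is an independent index with $\mathbb{P}(I=i)=w_{i}^{2}$, and $U_{i}$ is uniform on $[-1,1]$, because the zero-bias law of a Rademacher variable is uniform on $[-1,1]$. Let $h(x)=\abs{x}-\sqrt{2/\pi}$ and let $g$ be the bounded solution of the Stein equation $g'(x)-xg(x)=h(x)$. Then
\[
\E\abs{W}-\sqrt{2/\pi}=\E[g'(W)-g'(W^{*})].
\]
Because $h$ is even, $g$ is odd and $g'$ is even. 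Because $h$ is Lipschitz, $g'$ is Lipschitz, and $g''=(g'' )$ exists off $0$ with a bounded jump there. Concretely, $g''(x)=h'(x)+g(x)+xg'(x)$, and $h'=\mathrm{sgn}$ jumps by $2$ at the origin. We split $g'=\psi+\phi$, where $\psi(x)=\abs{x}$ carries the kink and $\phi$ is $C^{1}$ with $\phi''$ bounded away from $0$. We expect $\phi\in C^{1,1}$ with $\phi'$ odd, since the remaining part of $g''$ is continuous and odd.

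\textbf{Smooth part.} Write $V=W-w_{I}X_{I}$, which is independent of $(X_{I},U_{I})$ given $I$. By Taylor's formula,
\[
\E[\phi(V+w_{I}X_{I})-\phi(V+w_{I}U_{I})]
=\E[\phi'(V)w_{I}(X_{I}-U_{I})]+\tfrac12\E[\phi''(\xi)w_{I}^{2}(X_{I}^{2}-U_{I}^{2})],
\]
where $\phi''$ may be replaced by a Lipschitz bound on $\phi'$ if needed. The first-order term vanishes because $\E X_{I}=\E U_{I}=0$ and both are independent of $V$. The second term is at most $C\E w_{I}^{2}=C\sum_{i}w_{i}^{4}\le C\beta^{2}$. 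This is exactly where the linear rate of (\ref{eq:-3}) becomes quadratic. The saving comes from symmetry cancellation of the first-order term, which plain Lipschitz estimates cannot see.

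\textbf{Kink part (main obstacle).} It remains to bound $\E[\abs{V+w_{I}X_{I}}-\abs{V+w_{I}U_{I}}]$. For fixed $a=w_{I}$ and fixed $v$, the map $t\mapsto\abs{v+t}$ is affine on $[-\abs{a},\abs{a}]$ whenever $\abs{v}\ge\abs{a}$. On that event both averages agree with the affine function at the mean $0$, so the difference is zero. Otherwise the difference is at most $2\abs{a}$ in absolute value. Hence the kink contribution is bounded by
\[
2\sum_{i}w_{i}^{2}\,\abs{w_{i}}\,\mathbb{P}(\abs{V_{i}}\le\abs{w_{i}}),
\qquad V_{i}=W-w_{i}X_{i}.
\]
The crucial input is therefore a small-ball estimate: $\mathbb{P}(\abs{V_{i}}\le\beta)\le C\beta$ uniformly. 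We will obtain it from Berry--Esseen applied to $V_{i}$. Its variance is $1-w_{i}^{2}\ge1-\beta^{2}$, and its Lyapunov ratio is $\le\beta$. This gives $\mathbb{P}(\abs{V_{i}}\le\beta)\le\mathbb{P}(\abs{Z}\le 2\beta)+C\beta\le C'\beta$, with the Kolmogorov error $\sum\abs{w_{j}}^{3}\le\beta$. The total kink contribution is then $\le C\beta\cdot\beta\sum w_{i}^{2}=C\beta^{2}$. The delicate points are checking that the Stein solution's decomposition really isolates the kink as $\abs{x}$ plus a $C^{1,1}$ remainder with uniform constants, and keeping the small-ball bound uniform over all weights. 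Small $\beta_{0}$ ensures the variance of $V_{i}$ stays near $1$. Combining the smooth and kink parts gives (\ref{eq:-13}).

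\textbf{Optimality.} Take $n=m$ equal weights $w_{i}=1/\sqrt{m}$ with $m$ odd, so $\beta=m^{-1/2}$. Stirling's formula gives the classical expansion
\[
\E\abs{W}=\sqrt{2/\pi}\,(1+\tfrac{1}{4m}+O(m^{-2})),
\]
which deviates from $\sqrt{2/\pi}$ by order $\beta^{2}$. Hence the quadratic rate cannot be improved, and in particular $G(\beta)-\sqrt{2/\pi}\ge c\beta^{2}$.
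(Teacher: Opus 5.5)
Your proposal is correct and follows essentially the paper's proof: the zero-bias coupling $W^{*}=Y_I+w_IU_I$, cancellation of the first-order term because $X_i$ and $U_i$ are mean zero, confinement of the kink of $g'$ at the origin to the event $\{|Y_i|\le|w_i|\}$, and a Berry--Esseen small-ball bound $\mathbb{P}(|Y_i|\le|w_i|)=O(\beta)$ summed against $\sum_i|w_i|^{3}\le\beta$, with your split $g'=|x|+\phi$ doing the job of the paper's local Taylor estimate. The regularity you only ``expect'', boundedness of $\phi''=2g'+xg''$ off the origin, is exactly the paper's bound on $f'''$ (which follows from $g''(x)=O(|x|^{-3})$ via the Mills ratio), and your odd-$m$ Stirling example correctly exhibits a deviation of order $\beta^{2}$.
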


To confirm the quadratic rate is sharp, construct the uniform weight
vector $w_{i}=1/\sqrt{n}$ with $n=\lceil\beta^{-2}\rceil$. This
satisfies $\norm{w}_{2}=1$ and $\norm{w}_{\infty}=1/\sqrt{n}\leq\beta$.
Applying Stirling's formula yields 
\[
\E\abs{\frac{1}{\sqrt{n}}\sum^{n}_{i=1}X_{i}}\geq\sqrt{\frac{2}{\pi}}-C\beta^{2}+O(\beta^{4}),
\]
with $C=\frac{1}{2\sqrt{\pi}}$ for even $n$ and $C=\frac{3}{2\sqrt{2\pi}}$ for odd $n$,
providing a matching upper bound. Thus $G(\beta)=\sqrt{2/\pi}+O(\beta^{2})$,
and the $O(\beta^{2})$ rate is optimal.

We next prove \eqref{eq:-13} by combining Stein’s method and the
classical CLT. 

\section{Preliminaries}

We recap core tools from Stein’s method and zero-bias transformation
required for the main proof, including regularity properties of the
Stein equation solution for the absolute value function. 

\subsection{Zero-Bias Transformation}

Stein’s characterization for standard normality states that a random
variable $Z\sim\mathcal{N}(0,1)$ if and only if 
\[
\E[Zf(Z)]=\E[f'(Z)]
\]
for all absolutely continuous functions $f$ with integrable derivatives.
For any mean-zero, unit-variance random variable $W$, its zero-bias
transform $W^{*}$ is the unique random variable satisfying 
\[
\E[Wf(W)]=\E[f'(W^{*})]
\]
for all admissible $f$. For weighted Rademacher sums $W=\sum^{n}_{i=1}w_{i}X_{i}$
with $\norm{w}_{2}=1$, we construct $W^{*}$ explicitly. Let $U_{1},\dots,U_{n}$
be independent uniform random variables on $[-1,1]$, independent
of $\{X_{i}\}$. The zero-bias transform of each summand $w_{i}X_{i}$
is $w_{i}U_{i}$, verified via integration by parts. Let $I$ be an
independent random index with $\mathbb{P}(I=i)=w^{2}_{i}$, and define
leave-one-out sums 
\[
Y_{i}=\sum_{j\neq i}w_{j}X_{j}.
\]
The zero-bias transform of $W$ is then 
\[
W^{*}=Y_{I}+w_{I}U_{I}.
\]
This explicit coupling is a standard tool for refined normal approximation
of symmetric sums \cite{dobler2015stein}. 

\subsection{Stein Equation for the Absolute Value Function}

Let $a=\E\abs{Z}=\sqrt{2/\pi}$ denote the standard Gaussian first
absolute moment. We solve the Gaussian Stein equation for the test
function $h(x)=|x|$: 
\begin{equation}
f'(x)-xf(x)=|x|-a.\label{eq:stein}
\end{equation}
The unique bounded solution to \eqref{eq:stein} is 
\begin{equation}
f(x)=e^{x^{2}/2}\int^{x}_{-\infty}\left(|t|-a\right)e^{-t^{2}/2}dt.\label{eq:stein-sol}
\end{equation}
The solution possesses uniform derivative regularity away from the
origin, critical for our local error analysis.
\begin{lem}
\label{lem:stein-reg} There exists a universal constant $C_{0}>0$
such that the Stein solution $f$ in \eqref{eq:stein-sol} satisfies
\[
\sup_{x\neq0}\abs{f''(x)}\leq C_{0},\qquad\sup_{x\neq0}\abs{f'''(x)}\leq C_{0}.
\] 
\end{lem}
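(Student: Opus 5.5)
The plan is to compute $f$ in closed form on each half-line and then read off the bounds directly.

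\emph{Closed form.} For $x<0$, write $\Phi$ and $\varphi$ for the standard normal distribution function and density. Since $\int_{-\infty}^{x}(-t)e^{-t^{2}/2}\,dt=e^{-x^{2}/2}$, we get
\[
f(x)=1-a\sqrt{2\pi}\,\Phi(x)/\varphi(x)=1-2\Phi(x)/\varphi(x),\qquad x<0,
\]
using $a\sqrt{2\pi}=2$. For $x>0$, the integrand $|t|-a$ has total integral zero against $e^{-t^{2}/2}$, because $\E\abs{Z}=a$. Hence the integral from $-\infty$ to $x$ equals minus the integral from $x$ to $\infty$, which gives
\[
f(x)=-1+2(1-\Phi(x))/\varphi(x),\qquad x>0.
\]
Thus $f$ is odd, and it suffices to treat $x>0$. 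There $f(x)=-1+2R(x)$, with $R(x)=(1-\Phi(x))/\varphi(x)$ the Mills ratio.

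\emph{Derivatives of the Mills ratio.} $R$ satisfies the ODE $R'=xR-1$. Differentiating repeatedly gives
\[
R''=R+xR'=(1+x^{2})R-x,\qquad R'''=2xR+(1+x^{2})(xR-1)-1=(x^{3}+3x)R-(x^{2}+2).
\]
So $f''=2[(1+x^{2})R-x]$ and $f'''=2[(x^{3}+3x)R-x^{2}-2]$, both smooth on $(0,\infty)$.

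\emph{Bounds.} Near $0^{+}$, $R(0)=\sqrt{\pi/2}$ is finite, so both expressions are bounded on $(0,1]$ by continuity. As $x\to\infty$, the asymptotic series
\[
R(x)=\frac1x-\frac1{x^{3}}+\frac{3}{x^{5}}-\frac{15}{x^{7}}+O(x^{-9})
\]
shows the potentially growing terms cancel:
\[
(1+x^{2})R-x=\frac{2}{x^{3}}+O(x^{-5}),\qquad (x^{3}+3x)R-x^{2}-2=-\frac{6}{x^{4}}+O(x^{-6}).
\]
Both therefore tend to $0$ and are bounded on $[1,\infty)$.

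The asymptotics need to be rigorous with explicit remainders. The standard route is the alternating-series inequalities for $R$, obtained by repeated integration by parts of $\int_{x}^{\infty}e^{-t^{2}/2}\,dt$. For example,
\[
\frac1x-\frac1{x^{3}}+\frac3{x^{5}}-\frac{15}{x^{7}}\le R(x)\le \frac1x-\frac1{x^{3}}+\frac3{x^{5}},\qquad x>0,
\]
and these bracket the cancelling combinations by $O(x^{-3})$-type quantities for $x\ge1$.

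Combining the two ranges and using oddness ($f''$ is odd and $f'''$ is even, so the bounds transfer to $x<0$) yields a universal $C_{0}$. This matches the way $f$ is used later: it is $C^{1}$ with a jump in $f''$ at $0$, since $f''(0^{\pm})=\pm\sqrt{2\pi}$.

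\emph{Main obstacle.} The hardest step is the large-$x$ cancellation in $f'''$, which requires the Mills-ratio expansion to fourth order with controlled error. I would handle it by the integration-by-parts remainder, writing
\[
R(x)=\frac1x-\frac1{x^{3}}+\frac3{x^{5}}-\frac{15}{x^{7}}+\frac{105}{\varphi(x)}\int_{x}^{\infty}t^{-8}\varphi(t)\,dt.
\]
The last term is at most $105x^{-9}$.
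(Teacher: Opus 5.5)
Your argument proves the lemma, but the closed form has a wrong constant. Since $e^{x^{2}/2}\int_{-\infty}^{x}e^{-t^{2}/2}\,dt=\sqrt{2\pi}\,e^{x^{2}/2}\Phi(x)=\Phi(x)/\varphi(x)$, the correct formulas are $f(x)=1-a\,\Phi(x)/\varphi(x)$ for $x<0$ and $f(x)=-1+aR(x)$ for $x>0$. The coefficient is $a=\sqrt{2/\pi}$, not $2$; you picked up an extra factor $\sqrt{2\pi}$ when replacing $e^{x^{2}/2}$ by $1/\varphi(x)$.

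With coefficient $2$, your $f$ would satisfy $f'-xf=x-2$ on $(0,\infty)$ instead of the Stein equation. It would also have $f(0^{\pm})=\pm(\sqrt{2\pi}-1)\neq0$, which contradicts your own remark that $f$ is $C^{1}$. The correct jump is $f''(0^{\pm})=\pm1$, read off from $f''=f+xf'+\operatorname{sgn}(x)$ with $f(0)=0$, not $\pm\sqrt{2\pi}$.

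The slip is harmless for the lemma. Because $f''=aR''$ and $f'''=aR'''$ on $(0,\infty)$, it only rescales $C_{0}$. The rest of your argument is correct: the formulas for $R''$ and $R'''$, the continuity argument on $(0,1]$, the alternating brackets for $R$, and the remainder bound $105x^{-9}$.

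Apart from this, your reduction is the paper's: $f=-1+aR$ on $x>0$, oddness, and the Mills-ratio expansion giving $f''=a[(1+x^{2})R-x]=O(x^{-3})$. The routes differ only at $f'''$.

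The paper differentiates the Stein equation twice to get $f'''=xf''+2f'$ for $x\neq0$. It then bounds $|f'''|\le|x|\,|f''|+2|f'|$, using $|f'|=a|xR-1|\le a$ (from $0\le xR\le1$) and the $O(|x|^{-3})$ decay of $f''$. This needs no expansion beyond the one already used for $f''$. In your notation it is the identity $R'''=2R'+xR''$, which follows at once from $R''=R+xR'$. So the fourth-order cancellation you single out as the main obstacle can be bypassed.

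Your explicit computation does buy something: the exact decay $f'''(x)=-6a\,x^{-4}+O(x^{-6})$ and fully explicit two-sided remainders. You also treat the range near $0$ explicitly by continuity. The paper leaves that step implicit when it passes from large-$x$ asymptotics to boundedness on all of $(0,\infty)$.
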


\begin{proof}
For $x>0$, using $\int^{\infty}_{-\infty}(|t|-a)e^{-t^{2}/2}\,dt=0$,
we may rewrite \eqref{eq:stein-sol} as 
\begin{equation}
f(x)=-e^{x^{2}/2}\int^{\infty}_{x}(t-a)e^{-t^{2}/2}\,dt=-1+aR(x),\label{eq:}
\end{equation}
where 
\[
R(x):=e^{x^{2}/2}\int^{\infty}_{x}e^{-t^{2}/2}\,dt.
\]
Substituting \eqref{eq:} into \eqref{eq:stein} yields 
\begin{equation}
f'(x)=x\left(-1+aR(x)\right)+x-a=axR(x)-a.\label{eq:-1}
\end{equation}
By the Mills ratio inequality, 
$
0\le xR(x)\le1
$ for $x>0$,
so $|f'(x)|\le a$ for $x>0$. The case $x<0$ follows by symmetry
since $f$ is odd, hence $f'$ is even. Thus $\sup_{x\neq0}\abs{f''(x)}\le a<\infty$.

We now prove the bound on $f''$. Differentiating \eqref{eq:stein}
for $x\neq0$ gives 
\begin{equation}
f''(x)-f(x)-xf'(x)=\operatorname{sgn}(x).\label{eq:-2}
\end{equation}
Using \eqref{eq:} and \eqref{eq:-1}, we obtain for $x>0$,
\[
f''(x)=a(1+x^{2})R(x)-ax.
\]
Using the standard Mills ratio expansion $R(x)=x^{-1}-x^{-3}+O(x^{-5})$,
we obtain 
\[
f''(x)=a(1+x^{2})\left(x^{-1}-x^{-3}+O(x^{-5})\right)-ax=-ax^{-3}+O(x^{-3})=O(x^{-3}).
\]
Thus $f''$ is bounded on $(0,\infty)$, and by symmetry on $(-\infty,0)$.
Hence $\|f''\|_{\infty}\le C_{0}$.

Finally, we establish the bound on $f'''$ away from $0$. Differentiating
\eqref{eq:-2} for $x\neq0$ gives 
\[
f'''(x)=xf''(x)+2f'(x).
\]
Since $f''(x)=O(|x|^{-3})$, we have 
\[
|f'''(x)|\le|x|\,|f''(x)|+2|f'(x)|\le C'+2\norm{f'}_{\infty}<\infty,
\]
uniformly for $x\neq0$. Hence $\sup_{x\neq0}|f'''(x)|\le C_{1}$.
\end{proof}

\subsection{Taylor Estimate}

The only non-smoothness of $f'$ occurs at $x=0$, which dominates
the residual approximation error. To quantify this singularity, define
the truncated second derivative 
\[
d(y)=\begin{cases}
f''(y) & y\neq0,\\
0 & y=0.
\end{cases}
\]

\begin{lem}
\label{lem:local-taylor} There exists a universal constant $C_{1}>0$
such that for all $y,u\in\mathbb{R}$, 
\begin{equation}
\abs{f'(y+u)-f'(y)-u\,d(y)}\leq C_{1}\left(u^{2}+\abs{u}\ind{\abs{y}\leq\abs{u}}\right).\label{eq:local-est}
\end{equation}
\end{lem}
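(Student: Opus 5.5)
We split according to whether the segment between $y$ and $y+u$ avoids the singular point $0$. Throughout we use Lemma~\ref{lem:stein-reg}, so that $f''$ and $f'''$ are bounded by $C_{0}$ on $\mathbb{R}\setminus\{0\}$. We also use that $f'$ is continuous on all of $\mathbb{R}$: by \eqref{eq:stein}, $f'(x)=xf(x)+|x|-a$ with $f$ continuous. Hence $f'$ is globally Lipschitz with constant $\sup_{x\neq0}|f''(x)|\le C_{0}$, since it is $C^{1}$ on each half-line with bounded derivative and continuous at $0$.

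\emph{Case 1: $|y|>|u|$.} Then $y\neq0$, and the closed segment $[y,y+u]$ (or $[y+u,y]$) does not contain $0$, because $|y+tu|\ge|y|-|u|>0$ for $t\in[0,1]$. On this segment $f'$ is $C^{2}$ with $|f'''|\le C_{0}$. Taylor's theorem with Lagrange remainder gives
\[
\abs{f'(y+u)-f'(y)-uf''(y)}\le\tfrac{1}{2}C_{0}u^{2}.
\]
Since $d(y)=f''(y)$ here, this is \eqref{eq:local-est} with $C_{1}\ge C_{0}/2$.

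\emph{Case 2: $|y|\le|u|$.} Here we use no cancellation and bound each term crudely. By the global Lipschitz property, $|f'(y+u)-f'(y)|\le C_{0}|u|$. Also $|u\,d(y)|\le C_{0}|u|$, since $|d|\le C_{0}$ everywhere; this holds even when $y=0$, where $d(0)=0$. So the left side is at most $2C_{0}|u|=2C_{0}|u|\ind{\abs{y}\le\abs{u}}$. Taking $C_{1}=2C_{0}$ covers both cases. The case $u=0$ is trivial.

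The only step needing care is the global Lipschitz property of $f'$ across $0$. If one worried about the one-sided limits of $f''$ at $0$, the argument only needs that $f'$ is absolutely continuous with $f''\in L^{\infty}$, and this follows from the explicit formula \eqref{eq:-1}, $f'(x)=a|x|R(|x|)-a$, which is continuous at $0$ with value $-a$. The real obstacle is therefore just confirming that Lemma~\ref{lem:stein-reg} supplies a uniform bound on $f'''$ on each open half-line, so that the Taylor remainder in Case 1 is controlled. This is exactly what that lemma states.
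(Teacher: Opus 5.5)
Your proof is correct and follows the same route as the paper. When the segment from $y$ to $y+u$ avoids $0$, you use a Taylor expansion with Lagrange remainder controlled by $\sup_{x\neq0}|f'''|$; when $|y|\le|u|$, you use the crude bound $|f'(y+u)-f'(y)|+|u\,d(y)|\le 2K|u|$ from the global Lipschitz property of $f'$. Splitting on $|y|>|u|$ versus $|y|\le|u|$ (rather than on whether the segment meets $0$) and justifying the Lipschitz property across $0$ via continuity of $f'$ are minor tidy-ups, not a different argument.
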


\begin{proof}
Case 1: The line segment connecting $y$ and $y+u$ does not contain
$0$. Then $f'''$ is uniformly bounded on the segment by Lemma \ref{lem:stein-reg},
and standard Taylor expansion yields a remainder of order $O(u^{2})$.

Case 2: The segment crosses $0$. This forces $\abs{y}\leq\abs{u}$.
Since $f'$ is absolutely continuous and $f''$ are bounded a.e., as a standard consequence of the fundamental theorem of calculus for absolutely continuous functions,
$f'$ is Lipschitz continuous with Lipschitz constant at most $K=\sup_{x\neq0}\abs{f''(x)}<\infty$.
We obtain the bound $\abs{f'(y+u)-f'(y)}+\abs{ud(y)}\leq2K\abs{u}$,
which is absorbed by the indicator term $\abs{u}\ind{\abs{y}\leq\abs{u}}$.

Combining both cases establishes \eqref{eq:local-est}. 
\end{proof}

\section{Proof of Theorem \ref{thm:main}}

We decompose the approximation error via the zero-bias Stein identity,
bound incremental differences between Rademacher and uniform variables,
and control small-ball probabilities via the Berry--Esseen theorem
to yield quadratic error decay. 

For fixed index $i$ and state $y\in\mathbb{R}$, define the increment
discrepancy 
\[
A_{i}(y)=\E_{X}\left[f'(y+w_{i}X)\right]-\E_{U}\left[f'(y+w_{i}U)\right],
\]
where $X$ is Rademacher and $U$ is uniform on $[-1,1]$. Both variables
have zero mean, so linear Taylor terms vanish under expectation. Applying
Lemma \ref{lem:local-taylor} with $y+u=w_{i}X$ and $y=w_{i}U$ yields
\begin{equation}
\abs{A_{i}(y)}\leq C_{2}w^{2}_{i}+C_{2}\abs{w_{i}}\ind{\abs{y}\leq\abs{w_{i}}}.\label{eq:-4}
\end{equation}
By the Stein identity and zero-bias characterization, 
\[
\E\abs{W}-a=\E\left[f'(W)-Wf(W)\right]=\E f'(W)-\E f'(W^{*}).
\]
Substituting the zero-bias construction of $W^{*}$, we expand the
error: 
\begin{equation}
\begin{aligned}\abs{\E\abs{W}-a} & \leq\sum^{n}_{i=1}w^{2}_{i}\E\left|A_{i}(Y_{i})\right|\\
 & \leq C_{2}\sum^{n}_{i=1}w^{4}_{i}+C_{2}\sum^{n}_{i=1}\abs{w_{i}}^{3}\mathbb{P}\left(\abs{Y_{i}}\leq\abs{w_{i}}\right).
\end{aligned}
\label{eq:-5}
\end{equation}

We bound the small-ball probabilities $\mathbb{P}(\abs{Y_{i}}\leq\abs{w_{i}})$
using the classical Berry--Esseen theorem. The variance of leave-one-out
sums is 
\[
\sigma^{2}_{i}=\Var(Y_{i})=1-w^{2}_{i}.
\]
For $\beta\leq1/2$, we have $\sigma^{2}_{i}\geq3/4$. The third absolute
moment of the summands of $Y_{i}$ satisfies 
\[
\rho_{i}=\sum_{j\neq i}\abs{w_{j}}^{3}\leq\norm{w}_{\infty}\sum_{j\neq i}w^{2}_{j}\leq\beta.
\]
The Berry--Esseen inequality gives a uniform distributional error
bound: 
\begin{equation}
\sup_{x\in\mathbb{R}}\abs{\mathbb{P}(Y_{i}\leq x)-\Phi(x/\sigma_{i})}\leq C_{\text{BE}}\frac{\rho_{i}}{\sigma^{3}_{i}}\leq C_{3}\beta.\label{eq:-6}
\end{equation}

Since $\sigma_{i}\ge\sqrt{3/4}$ and $|w_{i}|\le\beta$, there exists
a universal constant $C_{4}>0$ such that $\mathbb{P}\left(|Z|\le|w_{i}|/\sigma_{i}\right)\le C_{4}|w_{i}|\le C_{4}\beta$.
Combining with the Berry--Esseen error term, we obtain 
\begin{equation}
\begin{aligned}\mathbb{P}\left(\abs{Y_{i}}\leq\abs{w_{i}}\right) & \leq\mathbb{P}\left(\abs{Z}\leq\frac{\abs{w_{i}}}{\sigma_{i}}\right)+2C_{3}\beta\\
 & \leq C_{4}\beta+2C_{3}\beta\,\leq C_{5}\beta.
\end{aligned}
\label{eq:-7}
\end{equation}

Using the normalization and maximal weight bound: 
\begin{equation}
\sum^{n}_{i=1}w^{4}_{i}\leq\norm{w}^{2}_{\infty}\sum^{n}_{i=1}w^{2}_{i}\leq\beta^{2},\qquad\sum^{n}_{i=1}\abs{w_{i}}^{3}\leq\norm{w}_{\infty}\sum^{n}_{i=1}w^{2}_{i}\leq\beta.\label{eq:-8}
\end{equation}
Substituting \eqref{eq:-7} and \eqref{eq:-8} into \eqref{eq:-5}
yields 
\[
\begin{aligned}\abs{\E\abs{W}-a} & \leq C_{2}\beta^{2}+C_{2}C_{5}\beta\cdot\beta\leq C\beta^{2}.\end{aligned}
\]
This establishes the upper bound of Theorem \ref{thm:main}. 

\section{Application to Strengthening FKN Theorem}

As a direct and significant application of our quadratic Gaussian
approximation theorem for weighted Rademacher sums, we derive a refined
level-1 Fourier weight bound for Boolean functions, which strengthens
the classical Friedgut--Kalai--Naor (FKN) theorem in the regime
where the maximal absolute value of single-coordinate Fourier coefficient tends to zero,
namely, functions deviate far from dictatorships.  

\subsection{The Sharp $\frac{\pi}{2}$ Theorem}

Let $f:\{\pm1\}^{n}\to\{\pm1\}$ be a Boolean function. Denote its
level-1 Fourier weight and maximal level-1 Fourier coefficient by
\[
W_{1}(f):=\sum^{n}_{i=1}\widehat{f}^{2}_{\{i\}},\qquad\theta:=\max_{1\le i\le n}\big|\widehat{f}_{\{i\}}\big|,
\]
where $\widehat{f}_{S}=\mathbb{E}_{\mathbf{X}}[f(\mathbf{X})\chi_{S}(\mathbf{X})]$
are the standard Boolean Fourier coefficients and $\chi_{S}(\mathbf{X})=\prod_{i\in S}X_{i}$
are Rademacher characters.  

We state the refined FKN-type bound as a corollary of our main theorem. 
\begin{thm}[Sharp $\frac{\pi}{2}$ Theorem]
\label{thm:pi-2}There exists a universal constant $C>0$ such that
for every Boolean function $f:\{\pm1\}^{n}\to\{\pm1\}$ with sufficiently
small $\theta=\max_{i}|\widehat{f}_{\{i\}}|$, 
\[
W_{1}(f)=\sum^{n}_{i=1}\widehat{f}^{2}_{\{i\}}\le\frac{2}{\pi}+C\theta^{2}.
\]
Moreover, the quadratic rate is attained by majority functions  in the sense of order, and
thus, is optimal in the order. 
\end{thm}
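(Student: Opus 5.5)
The plan is to reduce Theorem \ref{thm:pi-2} to Theorem \ref{thm:main} through the standard duality between level-1 weight and the first absolute moment of linear forms. Put $s:=\sqrt{W_1(f)}$. If $s$ is small, for instance $s\le 1/2$, the bound is trivial, so assume $s$ is bounded below by a constant. Define the normalized weight vector $w_i:=\widehat{f}_{\{i\}}/s$. Then $\norm{w}_2=1$ and $\norm{w}_\infty=\theta/s$, and set $W=\sum_i w_iX_i$. Since $|f|=1$, Plancherel gives
\[
s=\sum_i w_i\widehat{f}_{\{i\}}=\E\big[f(\mathbf{X})W\big]\le \E|W|.
\]

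The next step is to apply Theorem \ref{thm:main} with $\beta=\theta/s$. This needs $\beta\le\beta_0$, so I first need a constant lower bound on $s$ once $\theta$ is small. That bound is circular, and I break it in two stages. First, either $s\le 1/2$ and we are done, or $s>1/2$ and then $\beta\le 2\theta$. Taking $\theta\le\beta_0/2$ puts us in range. Theorem \ref{thm:main} then gives
\[
s\le\sqrt{2/\pi}+C\beta^2\le \sqrt{2/\pi}+4C\theta^2.
\]
Squaring yields $W_1(f)\le 2/\pi+C'\theta^2$ for $\theta$ small. This is the main inequality, and all the analytic work is already contained in Theorem \ref{thm:main}.

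For optimality, I take $f=\mathrm{Maj}_n$ with $n$ odd. Its level-1 coefficients all equal $\binom{n-1}{(n-1)/2}2^{-(n-1)}$, so $\theta\asymp n^{-1/2}$. By Stirling,
\[
W_1(\mathrm{Maj}_n)=n\,\theta^2=\frac{2}{\pi}+\frac{c}{n}+O(n^{-2})
\]
for an explicit nonzero constant $c$, so $|W_1-2/\pi|\asymp 1/n\asymp\theta^2$. Equivalently, $\widehat{\mathrm{Maj}}_{\{i\}}$ relates to $\E|n^{-1/2}\sum X_i|$, and one can reuse the Stirling computation stated after Theorem \ref{thm:main}. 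This shows the deviation of $W_1$ from $2/\pi$ is exactly of order $\theta^2$ along this family.

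The one delicate point is the sign of $c$. For odd $n$ the expansion $\binom{2m}{m}4^{-m}\approx(\pi m)^{-1/2}(1-1/(8m))$ gives $W_1$ approaching $2/\pi$ from above with $c>0$. That is the direction needed for the upper bound to be order-tight. I will verify this with the precise Stirling terms, since it decides whether majority witnesses sharpness of the $+C\theta^2$ term or only of the rate $|W_1-2/\pi|$. I expect this sign check, rather than the reduction itself, to be the main step to get right.
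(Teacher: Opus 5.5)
Your proposal is correct and follows the paper's route. You bound $W_1$ by the first absolute moment of the normalized linear form, apply Theorem \ref{thm:main} with $\beta=\theta/s$, and use majority for optimality, where $W_1(\mathrm{Maj}_n)=\frac{2}{\pi}+\frac{\theta^2}{2}+O(\theta^4)$ confirms your $c>0$. Your split into $s\le 1/2$ and $s>1/2$ makes explicit something the paper skips: the paper applies Theorem \ref{thm:main} without checking that $\beta=\theta/s$ is small. Normalizing first ($s=\E[fW]\le\E|W|$) also avoids the paper's $s^3$ manipulation.
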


This theorem improves the existing $\frac{\pi}{2}$ theorem \cite{ODonnell14analysisof} where the
latter stating $W_{1}(f)\le\frac{2}{\pi}+C\theta.$ Our upper bound
is asymptotically optimal as $\theta\to0$. For odd $n$, consider the majority
function $g(x)=\operatorname{sgn}(\sum^{n}_{i=1}X_{i})\big)$. For
this function, using the refined Stirling expansion,  as $n\to\infty$,
\[
\theta=\big|\widehat{g}_{\{i\}}\big|=\frac{1}{2^{n-1}}{n-1 \choose \frac{n-1}{2}}=\sqrt{\frac{2}{\pi(n-1)}}\left(1-\frac{1}{4(n-1)}+O(n^{-2})\right),
\]
and 
\[
W_{1}(g)=\frac{n}{4^{n-1}}\binom{n-1}{\frac{n-1}{2}}^{2}=\frac{2}{\pi}\left(1+\frac{1}{2(n-1)}+O(n^{-2})\right).
\]
Therefore, as $\theta\to0$, 
\[
W_{1}(g)=\frac{2}{\pi}+\frac{\theta^{2}}{2}+O(\theta^{4}),
\]
confirming the optimality of the quadratic order.  

Denote $a=\mathbb{E}f$. By applying Chang's bound, the present author
\cite{yu2023phi} derived a bound on $W_{1}$ that is sharp for $a=\frac{1}{2},\theta=\frac{1}{4}$
and asymptotically sharp for $a=\frac{1}{2},\theta\uparrow\frac{1}{2}$.
By applying the variant of Khintchine's inequality of König, Schütt,
and Tomczak-Jaegermann in \eqref{eq:-3}, the present author \cite{yu2023phi}
derived another bound: $W_{1}(f)\le\frac{2}{\pi}+C\theta$ with explicit
$C$.  Further improvements of FKN-type bound  can be  also found in  \cite{yu2025average}.

\subsection{Proof of Theorem \ref{thm:pi-2} }

Associate to the linear Fourier spectrum of $f$ the weighted Rademacher
sum 
\[
Z=\sum^{n}_{i=1}\widehat{f}_{\{i\}}X_{i},
\]
where $X_{i}$ are independent Rademacher variables. By orthogonality
of characters, 
\[
\mathbb{E}[Z^{2}]=W_{1}(f),\qquad\mathbb{E}[fZ]=\sum_{i}\widehat{f}_{\{i\}}\mathbb{E}[fX_{i}]=\sum_{i}\widehat{f}^{2}_{\{i\}}=W_{1}(f).
\]
Since $f(x)\in\{\pm1\}$, we have the pointwise inequality $fZ\le|Z|$.
Taking expectation yields the key deterministic spectral bound: 
\begin{equation}
W_{1}(f)=\mathbb{E}[fZ]\le\mathbb{E}|Z|.\label{eq:-9}
\end{equation}

Let $s=\sqrt{W_{1}(f)}$ and define normalized weights 
\[
w_{i}=\frac{\widehat{f}_{\{i\}}}{s}.
\]
These weights satisfy the normalization $\norm{w}_{2}=1$ and maximal
coefficient bound 
\[
\norm{w}_{\infty}=\frac{\theta}{s}=:\beta.
\]
By our Theorem \ref{thm:main}, for all sufficiently small $\beta>0$,
\[
\mathbb{E}\left|\sum^{n}_{i=1}w_{i}X_{i}\right|=\sqrt{\frac{2}{\pi}}+O(\beta^{2}).
\]
Rescaling by $s$ recovers the expectation for the unnormalized sum
$Z$: 
\begin{equation}
\mathbb{E}|Z|=s\cdot\left(\sqrt{\frac{2}{\pi}}+O\left(\frac{\theta^{2}}{s^{2}}\right)\right)=\sqrt{\frac{2}{\pi}}\,s+O\left(\frac{\theta^{2}}{s}\right).\label{eq:-10}
\end{equation}
Combining \eqref{eq:-9} and \eqref{eq:-10}, we obtain 
\[
s^{2}\le\sqrt{\frac{2}{\pi}}\,s+O\left(\frac{\theta^{2}}{s}\right).
\]
Multiplying both sides by $s$ eliminates the denominator: 
\begin{equation}
s^{3}\le\sqrt{\frac{2}{\pi}}\,s^{2}+O(\theta^{2}).\label{eq:-11}
\end{equation}

Rewrite \eqref{eq:-11} as 
\[
s^{2}\left(s-\sqrt{\frac{2}{\pi}}\right)\le C\theta^{2},
\]
for some universal constant $C>0$. For small $\theta$, the right-hand
side is negligible, which forces $s$ to be close to $\sqrt{2/\pi}$.
Precisely, suppose $s=\sqrt{2/\pi}+\delta$ for $\delta>0$. Then
$\frac{2}{\pi}\,\delta\le C\theta^{2}$, i.e., $\delta=O(\theta^{2}).$
This yields the tight bound 
\[
s\le\sqrt{\frac{2}{\pi}}+O(\theta^{2}).
\]
Squaring both sides   gives
\begin{equation}
s^{2}=W_{1}(f)\le\frac{2}{\pi}+O(\theta^{2}).\label{eq:-12}
\end{equation}

\section*{Acknowledgments}

\emph{Generative-AI use disclosure:} During the preparation of this
manuscript, the author used ChatGPT (GPT-5.5-mini model) as an auxiliary
tool for exploring proof ideas and improving exposition. The authors take full responsibility for all mathematical claims.

\bibliographystyle{unsrt}
\bibliography{ref}

\end{document}